\numberwithin{equation}{section}
\theoremstyle{plain}
\newtheorem{theorem}[equation]{Theorem}
\newtheorem{lemma}[equation]{Lemma}
\theoremstyle{definition}
\newtheorem{definition}[equation]{Definition}
\theoremstyle{remark}
\newcommand{\al}{\alpha}
\newcommand{\be}{\beta}
\newcommand{\ben}{\begin{enumerate}}
\newcommand{\bit}{\begin{itemize}}
\newcommand{\cald}{\mathcal{D}}
\newcommand{\D}{\partial}
\newcommand{\de}{\delta}
\newcommand{\een}{\end{enumerate}}
\newcommand{\eit}{\end{itemize}}
\newcommand{\fg}{\mathfrak{g}}
\newcommand{\La}{\Lambda}
\newcommand{\loc}{\operatorname{loc}}
\newcommand{\lra}{\longrightarrow}
\newcommand{\om}{\omega}
\newcommand{\Om}{\Omega}
\newcommand{\R}{\mathbb{R}}
\newcommand{\ra}{\rightarrow}
\definecolor{gray}{gray}{0.7}
\renewcommand{\th}{\theta}
\newcommand{\we}{\wedge}
\newcommand{\wt}{\operatorname{wt}}
\def\XXint#1#2#3{{\setbox0=\hbox{$#1{#2#3}{\int}$ }
\vcenter{\hbox{$#2#3$ }}\kern-.6\wd0}}
\begin{document}

\begin{abstract}
We consider Rumin's filtration on the de Rham complex of a Carnot group.  Although  Pansu pullback by a Sobolev map is filtration preserving, it need not be a chain mapping.  
 Nonetheless, we show that Pansu pullback induces a mapping of the associated spectral sequences.  This gives an alternate interpretation of the Pullback Theorem from \cite{KMX1}. 
\end{abstract}

\title{Sobolev mappings and the spectral sequence for Rumin's filtration on the de Rham complex}
\author{Bruce Kleiner}
\thanks{BK was supported by NSF grant DMS-2005553, and a Simons Collaboration grant.}
\email{bkleiner@cims.nyu.edu}
\address{Courant Institute of Mathematical Science, New York University, 251 Mercer Street, New York, NY 10012}
\author{Stefan M\"uller}
\thanks{SM has been supported by the Deutsche Forschungsgemeinschaft (DFG, German Research Foundation) through
the Hausdorff Center for Mathematics (GZ EXC 59 and 2047/1, Projekt-ID 390685813) and the 
collaborative research centre  {\em The mathematics of emerging effects} (CRC 1060, Projekt-ID 211504053).  This work was initiated during a sabbatical of SM at the Courant Institute and SM would like to thank  R.V. Kohn and the Courant Institute
members and staff for 
their  hospitality and a very inspiring atmosphere.}
\email{stefan.mueller@hcm.uni-bonn.de}
\address{Hausdorff Center for Mathematics, Universit\"at Bonn, Endenicher Allee 60, 53115 Bonn}
\author{Xiangdong Xie}
\thanks{XX has been supported by Simons Foundation grant \#315130.}
\email{xiex@bgsu.edu}
\address{Dept. of Mathematics and Statistics, Bowling Green State University, Bowling Green, OH 43403}

\maketitle

\tableofcontents

\section{Introduction}

This is a continuation of a series of papers on geometric mapping theory \cite{KMX1,KMX2,kmx_approximation_low_p,kmx_rumin,kmx_iwasawa}, in which we establish (partial) rigidity and (partial) regularity of bilipschitz, quasiconformal, or more generally Sobolev mappings in the subriemannian setting (in particular in Carnot groups).  In \cite{KMX1} we showed that for Sobolev maps between Carnot groups,  the  pullback of differential forms using the Pansu differential partially respects exterior differentiation; this result served as the starting point for a variety of applications.
Our aim in this paper is to relate the Pullback Theorem with the spectral sequence considered by Rumin in \cite{rumin_differential_geometry_cc_spaces}; our main result suggests that the spectral sequence provides a natural framework for interpreting the Pullback Theorem.

We refer the reader to Section~\ref{sec_prelim} for the definitions and details omitted from the condensed discussion below.

Let $X$ be an open subset of a Carnot group.   We denote by $\Om^*_{\cald'}X$ be the complex of differential forms with distributional coefficients equipped with the distributional exterior derivative\footnote{This is really de Rham's complex of currents, modulo reindexing which converts a chain complex into a cochain complex.}, and let $\hat\Om^*X$ be the overcomplex  of $\Om^*_{\cald'}X$ obtained 
by dropping the distributional continuity condition from the definition of $\Om^*_{\cald'}X$ (see Section~\ref{sec_prelim}).  In \cite{rumin_differential_geometry_cc_spaces}  Rumin introduced a 
filtration 
$$
\Om^*X=F^0\Om^*X\supset \ldots\supset F^p\Om^*X\supset\ldots\supset\{0\}
$$
on the de Rham complex $\Om^*X$ of $X$; here $F^p\Om^*X$ is a subcomplex for every $p$. This filtration may be extended in a natural way to the complexes $\Om^*_{\cald'}X$ and $\hat\Om^*X$.

Our main result is:
\begin{theorem}
\label{thm_ss_map_hat_om}
Let $f:M\ra M'$ be a $W^{1,q}_{\loc}$-mapping, where $M$, $M'$ are open subsets of Carnot groups,  and  $q$ is larger than the homogeneous dimension of $M$.  Then the Pansu pullback $f_P^*:\Om^*M'\ra \hat\Om^*M$ induces a mapping of spectral sequences
$$
\{E^{*,*}_r(\Om^*M')\lra E^{*,*}_r(\hat\Om^*M)\}_{r\geq 0}
$$
where $E^{*,*}_r(\Om^*M')$ and $E^{*,*}_r(\hat\Om^*M)$ are the spectral sequences associated with the filtrations on $\Om^*M'$ and $\hat\Om^*M$, respectively.
\end{theorem}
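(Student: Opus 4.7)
The plan is to combine two ingredients that are already available: the fact that $f_P^*$ is filtration preserving (flagged in the abstract and, presumably, proved earlier in Section~\ref{sec_prelim}), and the Pullback Theorem of \cite{KMX1}, which controls the failure of $f_P^*$ to commute with $d$. Standard spectral-sequence theory says that a filtration-preserving cochain map between filtered complexes induces maps on all pages $E_r$ compatible with the differentials $d_r$; the task here is to arrange that the non-chain-map $f_P^*$ still enjoys this conclusion, and the Pullback Theorem is designed to supply exactly the right substitute hypothesis.

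I would first recall the construction of the spectral sequence in standard form, writing $Z_r^{p,q}=F^p K^{p+q}\cap d^{-1}(F^{p+r}K^{p+q+1})$ and $E_r^{p,q}=Z_r^{p,q}/(Z_{r-1}^{p+1,q-1}+dZ_{r-1}^{p-r+1,q+r-2})$, with differential $d_r$ induced from $d$ on representatives. The tentative definition on each page is the obvious one, $[\om]\mapsto [f_P^*\om]$. For this to define $E_r^{p,q}(\Om^*M')\to E_r^{p,q}(\hat\Om^*M)$, one must verify (i) $f_P^*(Z_r^{p,q})\subset Z_r^{p,q}$, (ii) the ``boundary'' pieces $dZ_{r-1}^{p-r+1,q+r-2}$ and $Z_{r-1}^{p+1,q-1}$ are respected, and (iii) the induced map commutes with $d_r$. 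Points (ii) and (iii) will follow formally once (i) is known, so the crux is (i).

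The key step is to translate the Pullback Theorem of \cite{KMX1} into the statement: whenever $\om\in F^p\Om^*M'$ has $d\om\in F^{p+r}\Om^*M'$, the distributional identity $d(f_P^*\om)=f_P^*(d\om)$ holds inside $F^{p+r}\hat\Om^*M$. In other words, although $df_P^*\neq f_P^*d$ on all of $\Om^*M'$, the Pullback Theorem is precisely the assertion that the two agree when restricted to $Z_r^{p,q}$, viewed as elements of $\hat\Om^*M$. Equivalently, $df_P^*-f_P^*d$ vanishes on the relevant part of the filtered complex, which is exactly what is needed to get $f_P^*(\om)\in Z_r^{p,q}(\hat\Om^*M)$. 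The passage to $\hat\Om^*M$ (rather than $\Om^*_{\cald'}M$) is essential here, because $d(f_P^*\om)$ may not be a legitimate distributional derivative, and the overcomplex $\hat\Om^*M$ is exactly the home in which the Pullback Theorem lives.

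The hard part, as I see it, is this translation: the published Pullback Theorem controls one level of exterior differentiation of a Pansu pullback, but the spectral-sequence statement requires an iterated form that keeps track of how many filtration steps are gained when $d\om$ already lies deep in the filtration. I expect this to be the main technical content: unpacking the proof of the Pullback Theorem in \cite{KMX1} and reading off from it the filtration-refined identity $d(f_P^*\om)=f_P^*(d\om)\bmod F^{p+r}\hat\Om^*M$ for all $r\geq 0$ simultaneously, not just $r=1$. Once this refined identity is in hand, the verification of (i)-(iii) is essentially bookkeeping, and the theorem follows; this is precisely the sense in which the spectral sequence provides an ``alternate interpretation'' of the Pullback Theorem, as advertised in the abstract.
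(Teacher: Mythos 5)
Your plan hinges on step (i): that $f_P^*$ maps $Z_r^{p,q}(\Om^*M')$ into $Z_r^{p,q}(\hat\Om^*M)$, which you propose to extract from the Pullback Theorem in the ``filtration-refined'' form $\hat d(f_P^*\om)\equiv f_P^*(d\om)\bmod F^{p+r}\hat\Om^*M$ whenever $d\om\in F^{p+r}$. This is where the argument breaks. To place $\hat d(f_P^*\om)-f_P^*(d\om)$ in $F^{p+r}\hat\Om^*M$ you must test against arbitrary $\eta\in F^{\nu-(p+r-1)}\Om^{n-*}_{C^\infty_c}M$, and for such $\eta$ the hypothesis $\wt(\om)+\wt(d\eta)\geq\nu$ of Theorem~\ref{th:pull_back} gives only $p+(\nu-p-r+1)=\nu-r+1$, which fails for $r\geq 2$. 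The Pullback Theorem as stated therefore yields only the single-step congruence $\hat d(f_P^*\om)\equiv f_P^*(d\om)\bmod F^{p+1}$, no matter how deep $d\om$ lies in the filtration; the iterated identity you need is exactly what the theorem does not give, and the paper's warning that $f_P^*$ fails to be a chain map even for smooth contact diffeomorphisms is a strong signal that no representative-level identity of this kind is available. Deferring this to ``unpacking the proof in \cite{KMX1}'' leaves the essential point unproved. (Also, (ii) and (iii) do not follow formally from (i): commutativity with $d_r$ requires $\hat d f_P^*\om - f_P^*d\om$ to lie in the denominator $D^{p+r,q-r+1}_r$, which is a separate condition.)

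The paper's proof is structured precisely to avoid this. For $r\geq 1$ the map $\phi_r$ is \emph{not} $[\om]\mapsto[f_P^*\om]$; it is produced inductively from $\phi_{r-1}$ via the isomorphisms $E_{r}\simeq H(E_{r-1},d_{r-1})$, so its representatives $\om_r$ differ from $f_P^*\om'$ by accumulated correction terms in $B^{p,q}_{\bar r}+D^{p,q}_{\bar r}$. The only thing left to check is that $\phi_r$ is a chain map, and this is done by pairing against the dual \emph{homological} spectral sequence of $C_*=\Om^{n-*}_{C^\infty_c}M$ (Lemma~\ref{lem_duality_spectral_sequences}): the correction terms pair to zero, and one is reduced to applying Theorem~\ref{th:pull_back} to $\om'_r$ and test forms $\eta\in Z^r_{p+r,q-r+1}$. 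For those $\eta$ one has not only $\wt(\eta)\geq\nu-(p+r)$ but also $\D\eta\in F_pC_*$, i.e.\ $\wt(d\eta)\geq\nu-p$; the extra $r$ filtration steps gained by $d\eta$ are exactly dual to the $r$ steps gained by $d\om'_r$, and this is what makes both weight hypotheses of the Pullback Theorem hold simultaneously. If you want to salvage your approach, this duality is the missing ingredient: the chain-map property cannot be verified form-by-form on representatives, only after testing against cycles of the dual spectral sequence, where nondegeneracy of the induced pairing closes the argument.
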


Note that a filtration preserving chain mapping between filtered cochain complexes induces a mapping of the associated spectral sequences (this is the conventional way such mappings arise).   However,  the Pansu pullback $f_P^*:\Om^*M'\ra \hat\Om^*M$ need not be a chain mapping, even when $f$ is a smooth contact diffeomorphism. 
Theorem~\ref{thm_ss_map_hat_om} asserts that in spite of this, a chain mapping property does hold, but it is present only at the level of spectral sequences.

\section{Preliminaries}
\label{sec_prelim}

\subsection{Filtrations on the de Rham complex and its variants}~
\label{subsec_filtrations}

\subsection*{The distributional and rough de Rham complexes}
Let $M$ be an oriented smooth $n$-manifold.  

We denote by $\Om^*M:=\Om^*_{C^\infty}M$ and $\Om^*_{C^\infty_c}M$  the de Rham and compactly supported de Rham complexes on $M$, respectively.  

\bigskip
\begin{definition}
The {\bf rough de Rham complex} is the cochain complex 
$(\hat\Om^*M,\hat d)$ where
$$
\hat\Om^jM:=(\Om^{n-j}_{C^\infty_c}M)^*\,,\quad (\hat dT)(\eta):=T((-1)^{j+1}d\eta)
$$
for all $T\in \hat\Om^jM=(\Om^{n-j}_{C^\infty_c}M)^*$, $\eta\in \Om^{n-(j+1)}_{C^\infty_c}M$.  (Here we are using $V^*$ to denote the (algebraic) dual space of the vector space $V$, at the risk of confusion with the degree index on the complex.)
   Thus, up to reindexing and the sign convention, $\hat\Om^*M$ is the algebraic dual of the compactly supported de Rham complex. 
\end{definition}

\bigskip
We will often denote the differential $\hat d$ generically as $d$, although this has the potential to be confused with the exterior derivative.

\bigskip
\begin{definition}
The {\bf distributional de Rham complex} $(\Om^*_{\cald'}M,d^{\cald'})$ is the subcomplex of $\hat\Om^*M$ where 
$$
\Om^j_{\cald'}M:=\{T\in\hat\Om^jM=(\Om^{n-j}_{C^\infty_c}M)^*\mid T\;\text{continuous}\}\,.
$$
Here continuity refers to the standard distributional type condition, i.e. $T\in \hat\Om^jM$ is continuous iff for every compact subset $K\subset M$ there exists $j$ and $C$ such that 
$$
|T(\eta)|\leq C\|\eta\|_{C^j}
$$
for every $\eta\in \Om^{n-k}_{C^\infty_c}M$ supported in $K$.   As with the rough de Rham complex, we will often denote $d^{\cald'}$ generically by $d$.  

\end{definition}

\bigskip
Note that $\Om^*_{\cald'}M$ is the same thing as the chain complex of de Rham currents, modulo the reindexing which converts the chain complex to a cochain complex \cite{derham}.   

With the above definitions the  embedding 
$\Phi:\Om^*M\hookrightarrow \Om^*_{\cald'}M$
given by 
\begin{equation}
\label{eqn_phi_embedding}
\Phi(\om)(\eta)=\int_M\om\we \eta
\end{equation}
for all $\om\in \Om^kM$, $\eta\in \Om^{n-k}_{C^\infty_c}M$ is a chain mapping.

\bigskip\bigskip
\subsection*{Filtrations}
Let $(G,\fg=\oplus_jV_j)$ be a  Carnot group of dimension $n$ and homogeneous dimension $\nu$.  Let $\{X_i\}_{i\in I}$ be a graded basis for $\fg$, and $\{\th_i\}_{i\in I}$ be the dual basis;  for $J\subset I$ we let $\th_J=\we_{i\in J}\th_i$ where the wedge factors are taken in increasing order with respect to a fixed linear order on the index set $I$.   We define the weight of an element $i\in I$ to be $j$ if $X_i\in V_j$, and the weight of a subset $J\subset I$ to be the sum $\wt(J):=\sum_{i\in J}\wt(i)$.   Carnot dilation $\{\de_r\}_{r>0}$ acts diagonalizably on the exterior algebra of $\fg^*$, and we have a decomposition
$$
\La^*\fg^*=\oplus_{0\leq p\leq \nu}\La^{*,p}\fg^*
$$
where $\de_r^*\al=r^p\al$ for every $\al\in \La^{*,p}\fg^*$, $r>0$; let $\La^{*,\geq p}\fg^*:=\oplus_{\bar p\geq p}\La^{*,\bar p}\fg^*$.  In terms of the graded basis, we have $\al\in \La^{*,p}\fg^*$ iff $\al=\sum_{\wt J=p}a_J\th_J$.  Carnot dilation commutes with wedging and exterior differentiation, and therefore
\begin{equation}
\label{eqn_wedge_d}
\La^{*,p_1}\fg^*\we \La^{*,p_2}\fg^*\subset \La^{*,p_1+p_2}\fg^*\,,\quad d(\La^{*,p}\fg^*)\subset \La^{*,p}\fg^*\,.
\end{equation}
Using the left invariant trivialization of the exterior bundle $\La^*T^*G$, we let $\Om^{*,p}G$ and $\Om^{*,\geq p}G$ be the differential forms taking values in $\La^{*,p}\fg^*$ and $\La^{*,\geq p}\fg^*$, respectively.  Thus if $\om\in \Om^*G$, then 
\begin{equation}
\label{eqn_weight_graded_basis}
\begin{aligned}
\om\in\Om^{*,p}G\;\iff\; \om=\sum_{\wt J=p}a_J\th_J\quad\text{for some}\quad a_J\in C^\infty G\,.\\
\om\in\Om^{*,\geq p}G\;\iff\; \om=\sum_{\wt J\geq p}a_J\th_J\quad\text{for some}\quad a_J\in C^\infty G\,.\\
\end{aligned}
\end{equation}
Given an open subset $M\subset G$, we denote by $\Om^{*,p}M$, $\Om^{*,\geq p}M$, etc the corresponding objects on $M$.
Using \eqref{eqn_wedge_d} it is easy to check that the  sequence $F^p\Om^*M:=\Om^{*,\geq p}M$ is a descending filtration of $\Om^*M$ by graded differential ideals, and that this is the same as the filtration defined by Rumin \cite{rumin_differential_geometry_cc_spaces}.    We define a decreasing filtration on $\hat\Om^*M$ by 
$$
F^p\hat\Om^jM:=(F^{\nu-(p-1)}\Om^{n-j}_{C^\infty_c}M)^\perp
$$
where $F^p\Om^*_{C^\infty_c}M:=\Om^*_{C^\infty_c}M\cap F^p\Om^*M$ and $\perp$ denotes the annihilator;  thus $F^p\hat\Om^*M$ consists of the $T\in \hat\Om^jM$  such  that
$T(\eta)=0$ for all $\eta\in F^{\nu-(p-1)}\Om^{n-j}_{C^\infty_c}M$.   We filter $\Om^*_{\cald'}M\subset\hat\Om^*M$ by restricting the filtration on $\hat\Om^*M$.

\begin{lemma}
The embedding $\Phi:\Om^*M\hookrightarrow \Om^*_{\cald'}M$ from \eqref{eqn_phi_embedding} is filtration preserving.
\end{lemma}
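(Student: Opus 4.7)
The plan is to unpack the definition of the filtration on $\hat\Om^*M$ on the codomain side, reduce the claim to a purely algebraic statement about weights of wedge products, and then invoke the fact that the top-degree part of $\La^*\fg^*$ has weight exactly equal to the homogeneous dimension $\nu$.

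More precisely, fix $\om\in F^p\Om^jM=\Om^{j,\geq p}M$. To show $\Phi(\om)\in F^p\Om^j_{\cald'}M$, by the definition of the filtration it suffices to show
$$
\Phi(\om)(\eta)=\int_M\om\we\eta=0
$$
for every $\eta\in F^{\nu-(p-1)}\Om^{n-j}_{C^\infty_c}M=\Om^{n-j,\geq \nu-p+1}_{C^\infty_c}M$. By the multiplicativity of weights in \eqref{eqn_wedge_d}, the wedge product $\om\we\eta$ then lies in $\Om^{n,\geq \nu+1}M$.

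The key observation is that $\La^{n,q}\fg^*=0$ whenever $q>\nu$: indeed, an $n$-form in the graded basis description \eqref{eqn_weight_graded_basis} is a scalar multiple of $\th_I$, which has weight $\wt(I)=\sum_{i\in I}\wt(i)=\sum_j j\dim V_j=\nu$. Hence $\Om^{n,\geq\nu+1}M=0$, so $\om\we\eta\equiv 0$ pointwise and the integral vanishes.

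There is essentially no obstacle here: the statement is really a consequence of the fact that the filtration is dualized with a shift by $\nu$ precisely so that pairing against a top-weight form is the ``bottom'' of the dual filtration. The only thing that needs to be verified with care is the arithmetic $p+(\nu-(p-1))=\nu+1>\nu$, which makes the bookkeeping of the $(p-1)$ shift in the definition of $F^p\hat\Om^jM$ work out.
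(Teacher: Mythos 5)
Your argument is correct and is essentially the paper's own proof of the inclusion $\Phi(F^p\Om^jM)\subset F^p\Om^j_{\cald'}M$: both reduce to the observation that a top-degree form of weight exceeding $\nu$ must vanish (since $\th_I$ has weight exactly $\nu$), combined with the weight additivity of the wedge product from \eqref{eqn_wedge_d}. The paper's proof additionally records a converse --- that $\al\notin F^p\Om^jM$ implies $\Phi(\al)\notin F^p\Om^*_{\cald'}M$, obtained by pairing a nonvanishing low-weight coefficient $a_J$ against a bump function times $\th_{I\setminus J}$ --- but that is more than the stated claim of filtration preservation requires, so its omission is not a gap.
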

\begin{proof}
Suppose 
$\al\in \Om^jM$, $\be\in  \Om^{n-j}_{C^\infty_c}M$, with 
$
\al=\sum_{J_1}a_{J_1}\th_{J_1}$ and $\be=\sum_{J_2} b_{J_2}\th_{J_2}$.   
If $\al\in F^p\Om^*M$, $\be\in F^{\nu-(p-1)}\Om^{n-j}_{C^\infty_c}M$ then by \eqref{eqn_weight_graded_basis} we have $\al\we\be\equiv 0$ since $\th_{J_1}\we\th_{J_2}=0$ whenever $\wt(J_1)+\wt(J_2)>\nu$.  Hence $\Phi(F^p\Om^jM)\subset (F^{\nu-(p-1)}\Om^{n-j}_{C^\infty_c}M)^\perp=F^p\Om^*_{\cald'}M$.  

On the other hand, if $\al\not\in F^p\Om^jM$, then $a_J(x)\neq 0$ for some $J$ with $\wt J<p$, $x\in M$.  Then taking $\be=b\th_{I\setminus J}$ where $b\in C^\infty_cU$ is a nonnegative function supported near $x$, we get that 
$\int_M\al\we\be=\pm\int_Ma_Jb\th_I\neq 0$.  
   Hence $\Phi(\al)\not\in(F^{\nu-(p-1)}\Om^*_{C^\infty_c}M)^\perp$. 
\end{proof}

\bigskip
It will be useful to reinterpret the above structures slightly, as follows. 

Given a chain complex $(C_*,\D)$ with an increasing filtration $\{F_pC_*\}$ by  subcomplexes, the dual chain complex $(C^*,\D^*)$ inherits a dual descending filtration $\{F^pC^*:=(F_{p-1}C_*)^\perp\}$ by subcomplexes.  This makes sense for complexes of modules over an arbitrary commutative ring, but we will always work  with complexes of vector spaces over $\R$.

In our setting we let  $(C_*,\D)$ be the chain complex where $C_j:=\Om^{n-j}_{C^\infty_c}M$ and the boundary operator $\D_j:C_j\ra C_{j-1}$ is given by $\D_j\eta=(-1)^jd$, where $d$ denotes the exterior derivative, and we define an increasing filtration on $C_*$ by $F_pC_*:=\Om^{*,\geq \nu-p}_{C^\infty_c}M$.   Then $(\hat\Om^*,\hat d)$ is the cochain complex dual to the chain complex $(C_*,\D)$, and the dual filtration is 
$$
F^pC^*:=(F_{p-1}C_*)^\perp=(F^{\nu-(p-1)}\Om^*_{C^\infty_c}M)^\perp
$$ 
which is precisely $F^p\hat\Om^*M$ as defined before.

\bigskip
\subsection{Spectral sequences}
\label{subsec_spectral_sequences}
We recall some basic facts about spectral sequences which will be used in the proof of Theorem~\ref{thm_ss_map_hat_om}.  Sources are \cite[Chapters 1 and 2]{mccleary_users_guide_to_spectral_sequences},  \cite[Chapter 3, section 5]{griffiths_harris_principles}, \cite{hatcher_book}, and the wikipedia page on spectral sequences.

\bigskip
\begin{definition}
\label{def_spectral_sequence}
A {\bf (cohomological) spectral sequence} is a sequence $\{(E_r,d_r)\}_{r\geq r_0}$ of cochain complexes, together with isomorphisms $E_{r+1}\stackrel{\simeq}{\lra} H^*(E_r,d_r)$ for every $r\geq r_0$.   A {\bf mapping (or morphism)} between spectral sequences $\{(E_r,d_r)\}_{r\geq r_0}$ and $\{(E'_r,d'_r)\}_{r\geq r_0}$ is a collection of chain mappings $\{\phi_r:(E_r,d_r)\ra (E'_r,d'_r)\}_{r\geq r_0}$ which are compatible with taking cohomology, i.e. the following diagram commutes:
\begin{equation}
\label{eqn_compatible_system}
\begin{diagram}
E^{p,q}_{r+1}&\rTo^{\phi_{r+1}}&E'^{p,q}_{r+1}\\
\dTo^{\simeq}&&\dTo^{\simeq}\\
H^{p,q}(E^{*,*}_{r},d_r)&\rTo^{H(\phi_r)}&H^{p,q}(E'^{*,*}_{r},d_r)
\end{diagram}
\end{equation}
 for all $r\geq r_0$, and $H(\phi_r)$ is the map on cohomology induced by $\phi_r$.

\end{definition}

\bigskip
Let $(C^*,d)$ be a cochain complex and $\{F^pC^*\}$  
be a descending filtration by subcomplexes.  The {\bf standard cohomological spectral sequence $\{(E^{*,*}_r,d_r)\}_{r\geq 0}$} is defined as follows.
 Let 
\begin{align}
\notag Z^{p,q}_r&:=F^pC^{p+q  }\cap d^{-1}(F^{p+r}C^{p+q+1})\,,\quad r\geq 0\\
\notag B^{p,q}_r&:=d(F^{p-r}C^{p+q-1})\cap F^pC^{p+q}=d(Z^{p-r,q+r-1}_r)\,,\quad r \geq 0    \quad  
    \\
\label{eqn_denominators_dpqr}    D^{p,q}_0&:=F^{p+1}C^{p+q}\,,\quad D^{p,q}_r:=Z^{p+1,q-1}_{r-1}+B^{p,q}_{r-1}\,,\; r\geq 1
\\
E^{p,q}_0&:=F^pC^{p+q}/F^{p+1}C^{p+q}=Z^{p,q}_0/D^{p,q}_0\\
\label{eqn_e_p_q_def} E^{p,q}_r&:=\frac{Z^{p,q}_r}{Z^{p+1,q-1}_{r-1}+B^{p,q}_{r-1}}=Z^{p,q}_r/D^{p,q}_r\,,\quad r \geq 1
\end{align}
and let the differential $d_r:E^{p,q}_r\lra E^{p+r,q-r+1}_r$  be the map of bidegree $(r,-r+1)$ induced by $d:C^{p+q}\ra C^{p+q+1}$; finally,  we let $\eta_r:E^{p,q}_{r+1}\stackrel{\simeq}{\lra} H^{p,q}(E^{*,*}_r,d_r)$ be the isomorphism induced by
the composition  $Z^{p,q}_{r+1}\hookrightarrow Z^{p,q}_r\ra Z^{p,q}_r/D^{p,q}_r=E^{p,q}_r$.

Likewise, given a chain complex $(C_*,\D)$ with an increasing filtration $\{F_pC_*\}$ by subcomplexes, there is a homological spectral sequence $\{(E^r_{p,q},\D_r)\}_{r\geq 0}$ where
$\D_r$ has bidegree $(-r,r-1)$:
\begin{align}
\notag Z^r_{p,q}&:=F_pC_{p+q  }\cap \D^{-1}(F_{p-r}C_{p+q-1})\,,\quad r\geq 0\\
\notag B^r_{p,q}&:=\D(F_{p+r}C_{p+q+1})\cap F_pC_{p+q}=\D(Z^r_{p+r,q-r+1})\,,\quad r \geq 0    \quad  
    \\
\label{eqn_denominators_dpqr_homological}    D^0_{p,q}&:=F_{p-1}C^{p+q}\,,\quad D^r_{p,q}:=Z^{r-1}_{p-1,q+1}+B^{r-1}_{p,q}\,,\; r\geq 1
\\
E^0_{p,q}&:=F_pC_{p+q}/F_{p-1}C_{p+q}\\
E^r_{p,q}&:=Z^r_{p,q}/(Z^{r-1}_{p-1,q+1}+B^{r-1}_{p,q})=Z^r_{p,q}/D^r_{p,q}\,,\quad r\geq 1
\end{align}  
and the mapping $\D_r:E^r_{p,q}\ra E^r_{p-r,q+r-1}$ is induced by the boundary operator $\D$.

\begin{lemma}[Duality of spectral sequences]
\label{lem_duality_spectral_sequences}
Let $(C_*,\D)$ be a chain complex with an increasing filtration $F_pC_*$, and let $(C^*,d:=\D^*)$ be the dual cochain complex, with the dual filtration $F^pC^*:=(F_{p-1}C_*)^\perp$.  Then:
\ben
\item The restriction of the duality pairing
$$
C^*\times C_*\ra \R
$$
to $Z^{p,q}_r\times Z^r_{p,q}\subset C^*\times C_*$ descends to a well-defined pairing
\begin{equation}
\label{eqn_e_pqr_erpg_pairing}
E^{p,q}_r\times E^r_{p,q}\ra \R
\end{equation}
for all $r\geq 0$.
\item The pairing \eqref{eqn_e_pqr_erpg_pairing} is nondegenerate for all $p,q$, and $r\geq 0$. 
\een
\end{lemma}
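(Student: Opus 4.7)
My plan is to verify part (1) by direct computation, and to deduce part (2) from two standard consequences of algebraic Hahn-Banach: for any subspace $L$ of a vector space $V$, $L^{\perp\perp}=L$ in $V$, and for any two subspaces $A,B\subset V$, $(A\cap B)^\perp=A^\perp+B^\perp$ in $V^*$.

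For part (1), the pairing descends provided $D^{p,q}_r\perp Z^r_{p,q}$ and $Z^{p,q}_r\perp D^r_{p,q}$. Each of $D^{p,q}_r$ and $D^r_{p,q}$ (for $r\geq 1$) is a sum of a ``$Z$'' piece and a ``$B$'' piece, so I need to verify four orthogonalities. The two $Z$-pieces are handled directly by the dual-filtration identities $F^{p+1}C^{p+q}=(F_p C_{p+q})^\perp$ and $F^p C^{p+q}=(F_{p-1}C_{p+q})^\perp$. The two $B$-pieces follow from the adjoint relation $\langle d\gamma,\beta\rangle=\langle\gamma,\partial\beta\rangle$ (built into the definition of $\hat d$): for instance, if $\gamma\in F^{p-r+1}C^{p+q-1}$ and $\beta\in Z^r_{p,q}$, then $\partial\beta\in F_{p-r}C_{p+q-1}$, so $\langle d\gamma,\beta\rangle=\langle\gamma,\partial\beta\rangle=0$; symmetrically for the other $B$-piece. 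The $r=0$ case is similar and in fact simpler.

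For part (2), I compute the two annihilators explicitly. The condition $d\alpha\in F^{p+r}C^{p+q+1}$ is equivalent to $\alpha$ annihilating $\partial(F_{p+r-1}C_{p+q+1})$, so $Z^{p,q}_r=L^\perp$ where $L:=F_{p-1}C_{p+q}+\partial(F_{p+r-1}C_{p+q+1})$; applying $L^{\perp\perp}=L$ yields $(Z^{p,q}_r)^\perp=L$ in $C_{p+q}$. On the other side, $Z^r_{p,q}=F_p C_{p+q}\cap\ker\phi$ where $\phi:C_{p+q}\to C_{p+q-1}/F_{p-r}C_{p+q-1}$ is induced by $\partial$, and combining $(A\cap B)^\perp=A^\perp+B^\perp$ with the standard identity $(\ker\phi)^\perp=\im\phi^*$ gives $(Z^r_{p,q})^\perp=F^{p+1}C^{p+q}+d(F^{p-r+1}C^{p+q-1})$ in $C^{p+q}$.

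With these annihilators in hand, nondegeneracy is bookkeeping. If $\beta\in Z^r_{p,q}\cap(Z^{p,q}_r)^\perp=Z^r_{p,q}\cap L$, decompose $\beta=\beta'+\partial\eta$ with $\beta'\in F_{p-1}C_{p+q}$ and $\eta\in F_{p+r-1}C_{p+q+1}$; the constraints $\beta\in F_p C_{p+q}$ and $\partial\beta\in F_{p-r}C_{p+q-1}$ immediately force $\beta'\in Z^{r-1}_{p-1,q+1}$ and $\partial\eta\in B^{r-1}_{p,q}$, hence $\beta\in D^r_{p,q}$. Symmetrically, if $\alpha\in Z^{p,q}_r\cap(Z^r_{p,q})^\perp$, decompose $\alpha=\alpha_1+d\gamma$ with $\alpha_1\in F^{p+1}C^{p+q}$ and $\gamma\in F^{p-r+1}C^{p+q-1}$; then $d\alpha_1=d\alpha\in F^{p+r}C^{p+q+1}$ gives $\alpha_1\in Z^{p+1,q-1}_{r-1}$, and $d\gamma=\alpha-\alpha_1\in F^p C^{p+q}$ gives $d\gamma\in B^{p,q}_{r-1}$, so $\alpha\in D^{p,q}_r$. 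There is no real obstacle: the only nontrivial input is algebraic Hahn-Banach, and the only thing requiring care is the index shift $F^{p+1}=(F_p)^\perp$ (rather than $(F_{p+1})^\perp$) encoded in the definition of the dual filtration, which has to be applied consistently throughout.
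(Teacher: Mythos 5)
Your proof is correct, and for part (2) it takes a genuinely different route from the paper. Part (1) is the same in both: the paper dismisses it as ``straightforward unwinding of definitions,'' and your four orthogonality checks (the two $Z$-pieces killed by the index-shifted identities $F^{p+1}C^{p+q}=(F_pC_{p+q})^\perp$, $F^pC^{p+q}=(F_{p-1}C_{p+q})^\perp$, and the two $B$-pieces killed by adjointness of $d$ and $\D$) are exactly that unwinding. For part (2), however, the paper argues by induction on $r$, using the Universal Coefficient Theorem over the field $\R$ to propagate nondegeneracy from level $r$ to level $r+1$ via the identifications $E_{r+1}\cong H(E_r,d_r)$ and $E^{r+1}\cong H(E^r,\D_r)$; that route requires checking that the duality isomorphism at level $r+1$ obtained from UCT really is the one induced by the restricted pairing, i.e.\ compatibility with the structure maps $\eta_r$. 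Your argument instead computes the two annihilators $(Z^{p,q}_r)^\perp=F_{p-1}C_{p+q}+\D(F_{p+r-1}C_{p+q+1})$ and $(Z^r_{p,q})^\perp=F^{p+1}C^{p+q}+d(F^{p-r+1}C^{p+q-1})$ in closed form and verifies by direct decomposition that each intersects the relevant cycle space exactly in $D^r_{p,q}$ (resp.\ $D^{p,q}_r$); this works with no induction and makes transparent exactly where the hypothesis matters, namely that $C^*$ is the \emph{full} algebraic dual of $C_*$ (so that $L^{\perp\perp}=L$, $(A\cap B)^\perp=A^\perp+B^\perp$, and $(\ker\phi)^\perp=\im\phi^*$ all hold) --- the same place the paper's appeal to working over a field enters. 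Both proofs are complete; yours is more self-contained and identifies the level-$r$ boundary terms explicitly, while the paper's is shorter modulo the standard UCT machinery.
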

The proof of (1) is straightforward unwinding of definitions, while (2) follows by induction using the Universal Coefficient theorem and the fact that our coefficient ring is the field $\R$.

\bigskip\bigskip
\subsection{Sobolev mappings and Pansu pullback}
We refer the reader to \cite{KMX1} for more details on the material here.

Let $G$, $G'$ be Carnot groups, where $G$ has dimension $n$ and homogeneous dimension $\nu$, let $M\subset G$, $M'\subset G'$ be open subsets, and $f:M\ra M'$ be a $W^{1,p}_{\loc}$-mapping for some $p>\nu$.  Then $f$ is Pansu differentiable almost everywhere \cite{vodopyanov_differentiability_2003,kmx_approximation_low_p} (see also \cite{pansu,margulis_mostow_differential_quasiconformal_mapping}).  The Pansu differential $D_Pf(x):\fg\ra \fg'$ is a graded homomorphism of Lie algebras, and therefore the pullback $D_Pf(x)^*:\La^*\fg'^*\ra \La^*\fg^*$ 
respects the weight decompositions, i.e. 
\begin{equation}
\label{eqn_dpfx_weight_preserving}
D_Pf(x)^*\La^{*,p}\fg'^*\subset \La^{*,p}\fg^*\,.
\end{equation}
We may define the Pansu pullback $f_P^*\om$ of a differential form $\om\in \Om^*M'$ by 
$$
f_P^*\om(x)=(D_Pf(x)^*\om)(f(x))
$$
for a.e. $x\in M$; this is a differential form with locally integrable coefficients.  Hence Pansu pullback yields a mapping $f_P^*:\Om^*M'\ra \Om^*_{\cald'}M$ by integration, i.e.   $f_P^*\om(\eta):=\int_Mf_P^*\om\we \eta$; this preserves the weight filtrations, by \eqref{eqn_dpfx_weight_preserving} and the definition of the filtrations.  We require the following result about Pansu pullback and the exterior derivative:

\begin{theorem}[\protect{\cite[Thm 4.2]{KMX1}}]  \label{th:pull_back} 
  Suppose  $\om\in \Om^kM'$, $\eta\in \Om^{n-k-1}_{C^\infty_c}M$, and 
\begin{equation}
\label{eqn_om_eta_weight_condition}
\wt(\om)+\wt(d\eta)\geq \nu\,,\quad \wt(d\om)+\wt(\eta)\geq \nu\,.
\end{equation}  
Then
\begin{equation}  \label{eq:pullback_theorem}
\int_M(f_P^*d\om)\we\eta+(-1)^k\int_Mf_P^*\om\we d\eta=0\,.
\end{equation}
\end{theorem}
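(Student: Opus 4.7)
The plan is to establish the identity first for smooth maps via Stokes' theorem combined with a weight-counting argument, and then pass to the general Sobolev setting by smooth approximation. For a smooth map $g:M\to M'$ the ordinary pullback $g^*$ commutes with $d$, so applying Stokes' theorem to the compactly supported top-degree form $g^*\om\we\eta$ immediately yields
$$
\int_M g^*(d\om)\we\eta+(-1)^k\int_M g^*\om\we d\eta=0.
$$
The next step is to replace $g^*$ by $g_P^*$. Pointwise the differential $Dg(x):\fg\to\fg'$ decomposes as $D_Pg(x)+L(x)$, where $L(x)$ is strictly weight-decreasing; dually, $g^*\al-g_P^*\al$ lies in $F^{p+1}\Om^*M$ whenever $\al\in F^p\Om^*M'$. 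Consequently $(g^*(d\om)-g_P^*(d\om))\we\eta$ is a top-degree form in $F^{\wt(d\om)+\wt(\eta)+1}\subset F^{\nu+1}$ by the hypothesis $\wt(d\om)+\wt(\eta)\geq\nu$, and no top-degree form on a group of homogeneous dimension $\nu$ has weight exceeding $\nu$; hence this wedge vanishes identically. The analogous reasoning applied to $(g^*\om-g_P^*\om)\we d\eta$ uses the other weight inequality, and \eqref{eq:pullback_theorem} follows for every smooth $g$.

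To handle the general $W^{1,q}_\loc$ map $f$ with $q>\nu$, approximate $f$ by smooth maps $f_j:M\to M'$ in $W^{1,q}_\loc$. The assumption $q>\nu$ together with a Morrey-type Sobolev embedding in the Carnot--Carath\'eodory metric yields $f_j\to f$ uniformly on compact subsets, while the horizontal derivatives converge in $L^q_\loc$. Apply the smooth-case identity to each $f_j$ and let $j\to\infty$; the theorem will follow if each integral converges.

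The main obstacle is precisely this limit passage. A weight-$p$ summand of $f_{j,P}^*\al$ is a sum of monomials of total weight $p$ in the entries of $D_Pf_j$, and each weight-$w$ entry is a $w$-fold polynomial expression in the horizontal derivatives of $f_j$ (via iterated brackets), hence lies in $L^{q/w}_\loc$. By the weighted H\"older inequality the resulting monomials lie in $L^{q/p}_\loc$, and the inequality $q>\nu\geq p$ secures $q/p>1$, giving uniform local integrability along the sequence. This upgrades a.e. convergence of products of Pansu differential entries to $L^1_\loc$ convergence; combined with uniform convergence $a_J\circ f_j\to a_J\circ f$ of the smooth coefficient functions (from the Morrey continuity above), each integrand in the smooth-case identity converges, delivering \eqref{eq:pullback_theorem} for $f$.
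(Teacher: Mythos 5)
The statement you are proving is quoted in this paper from \cite{KMX1}; the paper itself contains no proof of it, so your proposal has to stand on its own. Its first half is fine as far as it goes: for a \emph{smooth contact} map $g$ the ordinary differential does preserve the filtration $V_1\oplus\cdots\oplus V_j$, so $Dg=D_Pg+L$ with $L$ strictly weight-decreasing, dually $g^*\al-g_P^*\al\in F^{\wt(\al)+1}$, and the two weight hypotheses in \eqref{eqn_om_eta_weight_condition} kill the correction terms because no top-degree form has weight exceeding $\nu$. Combined with Stokes this gives \eqref{eq:pullback_theorem} for smooth contact maps, which is indeed the easy, classical case.

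The gap is the approximation step, and it is fatal rather than technical. First, your smooth-case identity only makes sense for contact maps: a generic smooth map between open subsets of Carnot groups is not Pansu differentiable, the decomposition $Dg=D_Pg+L$ with $L$ weight-decreasing fails, and $g_P^*$ is simply undefined; in particular ordinary mollifications of $f$ are not admissible approximants. Second, smooth contact maps are \emph{not} dense in $W^{1,q}_{\loc}(M,M')$: for rigid Carnot groups (e.g.\ higher Heisenberg groups, $H$-type groups with large center) the smooth contact maps form a finite-dimensional family, while the Sobolev class is infinite-dimensional, so no sequence $f_j$ of maps to which your smooth-case identity applies can converge to a general $f$. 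Even where some approximation exists, you would need the full Pansu differentials $D_Pf_j$ --- not just the horizontal derivatives --- to converge to $D_Pf$ in $L^{q/w}_{\loc}$ weight by weight, and nothing in the $W^{1,q}_{\loc}$ convergence you invoke supplies this for non-contact approximants. This is precisely why the proof in \cite{KMX1} does not proceed by smoothing the map at all, but instead works directly with finite difference quotients of $f$ at scale $r$ (a discrete analogue of the Pansu differential), integrates by parts at each fixed scale, and uses the weight inequalities \eqref{eqn_om_eta_weight_condition} together with $q>\nu$ to show the error terms vanish as $r\to 0$. Your integrability bookkeeping in the last paragraph (weight-$w$ entries in $L^{q/w}_{\loc}$, hence monomials of weight $p\le\nu$ in $L^{q/p}_{\loc}$) is correct and is part of that argument, but it cannot substitute for the missing approximation theorem.
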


\bigskip
\section{Proof of Theorem~\ref{thm_ss_map_hat_om}}
Let $M$, $M'$ and $f:M\ra M'$ be as in the statement of theorem, where $M$ has homogeneous dimension $\nu$.  

The proof uses three filtered complexes
as discussed in Subsection~\ref{subsec_filtrations}:
\bit
\item The cochain complex $C'^*:=\Om^*M'$, with the differential given by the exterior derivative.
\item The chain complex $C_*:=\Om^{n-*}_{C^\infty_c}M$, with boundary operator given by $\D:=(-1)^jd:C_j\ra C_{j-1}$.
\item The cochain complex $C^*:=\hat\Om^*M$, with differential given by $\hat d$, so that $C^*$ is the cochain complex dual to the chain complex $C_*$.
\eit
To further reduce burdensome notation, we let $\{(E'^{*,*}_r,d_r)\}$ and $\{(E^{*,*}_r,d_r)\}$ be the spectral sequences for $C'^*$ and $C^*$, respectively.

Let $f_P^*:C'^*\ra C^*$ denote Pansu Pullback as defined in Section~\ref{sec_prelim}.  Since Pansu pullback is filtration preserving, it induces a mapping  
$$
\phi_0:E'^{p,q}_0:=F^pC'^{p+q}/F^{p+1}C'^{p+q}\ra F^pC^{p+q}/F^{p+1}C^{p+q}=:E^{p,q}_0
$$ 
of the  $E_0$ terms of the spectral sequences.   We will extend $\phi_0$ to a compatible system of chain mappings $\{\phi_r:E'^{p,q}_r\ra E^{p,q}_r\}_{r\geq 0}$ by induction on $r$  (see Definition~\ref{def_spectral_sequence}).

Pick $r> 0$, and assume inductively that  we have a collection of chain mappings $\{\phi_{\bar r}:E'^{p,q}_{\bar r}\ra E^{p,q}_{\bar r}\}_{0\leq \bar r<r}$ for all $p,q$,  so the following diagram commutes for $0\leq \bar r<r-1$:
\begin{equation}
\label{eqn_compatible_system_2}
\begin{diagram}
E'^{p,q}_{\bar r+1}&\rTo^{\phi_{\bar r+1}}&E^{p,q}_{\bar r+1}\\
\dTo^{\simeq}&&\dTo^{\simeq}\\
H^{p,q}(E'^{*,*}_{\bar r},d_{\bar r})&\rTo^{H(\phi_{\bar r})}&H^{p,q}(E^{*,*}_{\bar r},d_{\bar r})\,.
\end{diagram}
\end{equation}
Since $r>0$ and $\phi_{r-1}$ is a chain mapping, then we may extend this to a collection $\{\phi_{\bar r}:E'^{p,q}_{\bar r}\ra E^{p,q}_{\bar r}\}_{0\leq \bar r\leq r}$ such that \eqref{eqn_compatible_system_2} holds for $0\leq \bar r<r$.   We will show that $\phi_r$ is a chain mapping.

Fixing $p,q$, we want to prove commutativity of the diagram
\begin{equation}
\label{eqn_level_r_commutes}
\begin{diagram}
E'^{p,q}_r&\rTo^{d_r}&E'^{p+ r,q- r+1}_r\\
\dTo^{\phi_r}&&\dTo^{\phi_r}\\
 E^{p,q}_r&\rTo^{d_r}& E^{p+ r,q- r+1}_r
\end{diagram}
\end{equation}
To that end, pick $[\om'_r]\in E'^{p,q}_r$, so $\om'_r\in Z'^{p,q}_r$ and $d_r[\om'_r]=[d\om'_r]\in E'^{p+r,q-r+1}_r$.  Let
\begin{equation}
\label{eqn_om'_zeta'_def}
\begin{aligned}
\om'_0&=\ldots=\om'_r\in Z'^{p,q}_r\\
\zeta'_0&=\ldots=\zeta'_r:=d\om'_r\in Z'^{p+r,q-r+1}_r\,,\\
\end{aligned}
\end{equation}
so $(\om'_{\bar r})_{0\leq \bar r\leq r}$, $(\zeta'_{\bar r})_{0\leq \bar r\leq r}$ are compatible sequences.

For $0\leq \bar r\leq r$ let 
\begin{align*}
[\om_{\bar r}]&:=\phi_{\bar r}([\om'_{\bar r}])\in E^{p,q}_{\bar r}\\
[\zeta_{\bar r}]&:=\phi_{\bar r}([\zeta'_{\bar r}])\in E'^{p+r,q-r+1}_{\bar r}
\end{align*}
where $\om_{\bar r}\in Z^{p,q}_{\bar r}$, $\zeta_{\bar r}\in Z'^{p+r,q-r+1}_{\bar r}$, and $\om_0=f_P^*\om'_0$, $\zeta_0=f_P^*\zeta'_0$.   For every $0\leq \bar r<r$, since \eqref{eqn_compatible_system_2} commutes, we have that  
   $\phi_{\bar r+1}([\om'_{\bar r+1}])$ and $\phi_{\bar r+1}([\zeta'_{\bar r+1}])$ are the cohomology classes determined by $\phi_{\bar r}([\om'_{\bar r}])$ and $\phi_{\bar r}([\zeta'_{\bar r}])$, respectively.   Unwinding definitions, this means that for $0\leq \bar r<r$ we have 
\begin{equation}
\label{eqn_om_r_om_r_plus_1}
\om_{\bar r+1}-\om_{\bar r}\in B^{p,q}_{\bar r}+D^{p,q}_{\bar r}\,,\quad \zeta_{\bar r+1}-\zeta_{\bar r}\in B^{p+r,q-r+1}_{\bar r}+D^{p+r,q-r+1}_{\bar r}.
\end{equation}

Therefore we have  
$
d_r([\om'_r])=[d\om'_r]=[\zeta'_r]
$
so
\begin{align}
\phi_r(d_r[\om'_r])=\phi_r([\zeta'_r])=[\zeta_r]\\
d_r\phi_r([\om'_r])=d_r[\om_r]=[d\om_r]\,.
\end{align}
To prove commutativity of \eqref{eqn_level_r_commutes}, we use the nondegeneracy of the pairing 
\begin{equation}
\label{eqn_epprqmrp1_pairing}
E^{p+r,q-r+1}_r\times E^r_{p+r,q-r+1}\ra \R
\end{equation}  
from Lemma~\ref{lem_duality_spectral_sequences}.  

Pick $\eta \in Z^r_{p+r,q-r+1}$, so $\D\eta\in B^r_{p,q}\subset Z^r_{p,q}$.  Using brackets to denote the duality pairing, by \eqref{eqn_om_r_om_r_plus_1} and 
Lemma~\ref{lem_duality_spectral_sequences}(1) we get
\begin{equation}
\label{eqn_om_r_om_0}
\begin{aligned}
\langle \om_r,\D\eta\rangle&=\langle \om_0,\D\eta\rangle+\sum_{0\leq \bar r<r}\langle \om_{\bar r+1}-\om_{\bar r},\D\eta\rangle=\langle \om_0,\D\eta\rangle\\
\langle \zeta_r,\eta\rangle&=\langle \zeta_0,\eta\rangle+\sum_{0\leq \bar r<r}\langle \zeta_{\bar r+1}-\zeta_{\bar r},\eta\rangle=\langle \zeta_0,\eta\rangle\,.\\
\end{aligned}
\end{equation}

Recalling that $F^pC^*=\Om^{*,\geq p}M$, $F_pC_*=\Om^{n-*,\geq\nu-p}_{C^\infty_c}M$, we  have
\begin{align*}
\eta&\in Z^r_{p+r,q-r+1}\subset F_{p+r}C_{p+q+1}=\Om^{n-(p+q+1),\geq \nu-(p+r)}_{C^\infty_c}M\\
\D\eta&= (-1)^{p+q+1}d\eta\in F_pC_{p+q}=\Om^{n-(p+q),\geq\nu-p}_{C^\infty_c}M\\
\om'_r&\in Z'^{p,q}_r\subset F^pC^{p+q}=\Om^{p+q,\geq p}M\\
d\om'_r&\in F^{p+r}C^{p+q+1}=\Om^{p+q+1,\geq p+r}M
\end{align*}
so
\begin{align*}
\wt(\om'_r)+\wt(d\eta)\geq \nu\\
\wt(d\om'_r)+\wt(\eta)\geq \nu\,.
\end{align*}
Therefore by Theorem~\ref{th:pull_back} we get 
\begin{align*}
\langle \phi_r(&d_r[\om'_r]),[\eta]\rangle-\langle d_r(\phi_r([\om'_r])),[\eta]\rangle\\
&=\langle \zeta_r,\eta\rangle-\langle d\om_r,\eta\rangle\\
&=\langle \zeta_r,\eta\rangle-\langle \om_r,\D\eta\rangle\\
&=\langle \zeta_0,\eta\rangle-\langle \om_0,\D\eta\rangle\quad\text{by \eqref{eqn_om_r_om_0}}\\
&=\langle f_P^*(d\om'_r),\eta\rangle-\langle f_P^*\om'_0,\D\eta\rangle\\
&=\langle f_P^*(d\om'_r),\eta\rangle+\langle f_P^*\om'_r,(-1)^{p+q}d\eta\rangle\\
&=0\,.
\end{align*}
Since $\eta$ was chosen arbitrarily and the pairing \eqref{eqn_epprqmrp1_pairing} is nondegenerate, it follows that $\phi_r(d_r([\om'_r]))=d_r(\phi_r([\om'_r]))$.  Thus $\phi_r$ is a chain mapping.

\bibliography{product_quotient}

\providecommand{\bysame}{\leavevmode\hbox to3em{\hrulefill}\thinspace}
\providecommand{\MR}{\relax\ifhmode\unskip\space\fi MR }
% \MRhref is called by the amsart/book/proc definition of \MR.
\providecommand{\MRhref}[2]{%
  \href{http://www.ams.org/mathscinet-getitem?mr=#1}{#2}
}
\providecommand{\href}[2]{#2}
\begin{thebibliography}{KMX21b}

\bibitem[dR84]{derham}
G.~de~Rham, \emph{Differentiable manifolds}, Grundlehren der mathematischen
  Wissenschaften [Fundamental Principles of Mathematical Sciences], vol. 266,
  Springer-Verlag, Berlin, 1984, Forms, currents, harmonic forms, Translated
  from the French by F. R. Smith, With an introduction by S. S. Chern.
  \MR{760450}

\bibitem[GH78]{griffiths_harris_principles}
P.~Griffiths and J.~Harris, \emph{Principles of algebraic geometry},
  Wiley-Interscience [John Wiley \& Sons], New York, 1978, Pure and Applied
  Mathematics. \MR{507725}

\bibitem[Hat02]{hatcher_book}
A.~Hatcher, \emph{Algebraic topology}, Cambridge University Press, Cambridge,
  2002. \MR{1867354}

\bibitem[KMX]{kmx_approximation_low_p}
B.~Kleiner, S.~M\"{u}ller, and X.~Xie, \emph{Pansu pullback and exterior
  differentiation for {S}obolev maps on {C}arnot groups}, arXiv:2007.06694.

\bibitem[KMX20]{KMX1}
\bysame, \emph{Pansu pullback and rigidity of mappings between {C}arnot
  groups}, 2020.

\bibitem[KMX21a]{kmx_iwasawa}
\bysame, \emph{Rigidity of flag manifolds}, 2021.

\bibitem[KMX21b]{kmx_rumin}
\bysame, \emph{Sobolev mappings and the {R}umin complex}, arXiv:2101.04528,
  2021.

\bibitem[KMX21c]{KMX2}
\bysame, \emph{Sobolev mappings between nonrigid {C}arnot groups},
  arXiv:2112.01866, 2021.

\bibitem[McC01]{mccleary_users_guide_to_spectral_sequences}
J.~McCleary, \emph{A user's guide to spectral sequences}, second ed., Cambridge
  Studies in Advanced Mathematics, vol.~58, Cambridge University Press,
  Cambridge, 2001. \MR{1793722}

\bibitem[MM95]{margulis_mostow_differential_quasiconformal_mapping}
G.~A. Margulis and G.~D. Mostow, \emph{The differential of a quasi-conformal
  mapping of a {C}arnot-{C}arath\'{e}odory space}, Geom. Funct. Anal.
  \textbf{5} (1995), no.~2, 402--433. \MR{1334873}

\bibitem[Pan89]{pansu}
P.~Pansu, \emph{M\'{e}triques de {C}arnot-{C}arath\'{e}odory et
  quasiisom\'{e}tries des espaces sym\'{e}triques de rang un}, Ann. of Math.
  (2) \textbf{129} (1989), no.~1, 1--60. \MR{979599}

\bibitem[Rum99]{rumin_differential_geometry_cc_spaces}
M.~Rumin, \emph{Differential geometry on {C}-{C} spaces and application to the
  {N}ovikov-{S}hubin numbers of nilpotent {L}ie groups}, C. R. Acad. Sci. Paris
  S\'{e}r. I Math. \textbf{329} (1999), no.~11, 985--990. \MR{1733906}

\bibitem[Vod03]{vodopyanov_differentiability_2003}
S.~K. Vodopyanov, \emph{On the differentiability of mappings of {S}obolev
  classes on the {C}arnot group}, Mat. Sb. \textbf{194} (2003), no.~6, 67--86.
  \MR{1992177}

\end{thebibliography}
\bibliographystyle{amsalpha}

\end{document}